\newtheorem{theorem}{Theorem}
\newtheorem{corollary}[theorem]{Corollary}
\newtheorem{definition}[theorem]{Definition}
\newtheorem{example}[theorem]{Example}
\newtheorem{lemma}[theorem]{Lemma}
\newtheorem{proposition}[theorem]{Proposition}
\newenvironment{proof}[1][Proof]{\noindent\textbf{#1.} }{\ \rule{0.5em}{0.5em}}
\begin{document}

\title{Strictly periodic points and periodic factors of cellular automata }
\author{Nacira Allaoua and Rezki Chemlal. \and Laboratoire de Math\'{e}%
matiques Appliqu\'{e}es, Facult\'{e} des sciences exactes. \and Universit%
\'{e} Abderahmane Mira Bejaia.06000 Bejaia Algeria.}
\maketitle

\begin{abstract}
We show that the set of strictly temporally periodic points of cellular
automata with almost equicontinuous points is dense in the topological
support of the measure. This extends a result of Lena, Margara and Dennunzio
about the density of the set of strictly temporally periodic of cellular
automata with equicontinuous points.

\textbf{2000 Mathematics Subject Classification :} 37B15, 54H20, 37A30.

\textbf{Key words :}\textit{\ }Cellular Automata,Dynamical systems, Ergodic
theory, Equicontinuous points.
\end{abstract}

\section{Introduction}

Cellular automata are computational objects used and studied as models in
many applied domains.

They are used to model complex systems, for example in statistical physics
as microscale models of hydrodynamics \cite{Chop03}, to model social choices 
\cite{Nowak96} or tumor growth \cite{ABM03}.

This interest is due to the enormous potential they hold in modeling complex
systems.

A cellular automaton is made of an infinite lattice of finite identical
automata. The lattice is usually $\mathbb{Z}^{n}$ with $n$ called the
dimension of the cellular automaton. Each automaton updates its state
synchronously according to a local rule on the basis of its actual state and
of the one of a fixed finite set of neighboring automata. The set of
possible states of an automaton is called the alphabet and each element of
the alphabet is referred to as a letter. A configuration is a snapshot of
the state of all automata in the lattice.

The study of CA as symbolic dynamical systems began with Hedlund \cite{Hed69}%
. Dynamical behavior of cellular automata is studied mainly in the context
of discrete dynamical systems by equipping the space of configurations with
the product topology which makes it homeomorphic to the Cantor space.

\section{Definitions and background.}

\subsection{Topological dynamics.}

A topological dynamical system $\left( X,T\right) $ consists of a compact
metric space $X$ and a continuous self--map $T$ .

A point $x$ is said periodic if there exists $p>0$ with $T^{p}\left(
x\right) =x.$ The least $p$ with this property is called the period of $x.$
A\ point $x$ is eventually or ultimately periodic if $T^{m}\left( x\right) $
is periodic for some $m\geq 0.$

In the same way a dynamical system is said periodic if there exists $p>0$
with $T^{p}\left( x\right) =x$ for every $x\in X$ and eventually periodic if 
$T^{m}$ is periodic for some $m\geq 0.$

A point $x\in X$ is said to be an equicontinuity point, or to be Lyapunov
stable, if for any $\epsilon >0$, there exists $\delta >0$ such that if $%
d\left( x,y\right) <\delta $ one has $d\left( T^{n}\left( y\right)
,T^{n}\left( x\right) \right) <\epsilon $ for any integer $n\geq 0$.

When all points of $\left( X,T\right) $ are equicontinuity points $\left(
X,T\right) $ is said to be equicontinuous: since $X$ is compact an
equicontinuous system is uniformly equicontinuous. We say that $\left(
X,T\right) $ is almost equicontinuous if the set of equicontinuous points is
residual.

We say that $\left( X,T\right) $ is sensitive if for any $x\in X$ we have :%
\begin{equation*}
\exists \epsilon >0,\forall \delta >0,\exists y\in B_{\delta }\left(
x\right) ,\exists n\geq 0\text{ }\mathrm{such}\text{ \textrm{that }}d\left(
T^{n}\left( y\right) ,T^{n}\left( x\right) \right) \geq \epsilon .
\end{equation*}

We say that $\left( X,T\right) $ is expansive if we have :%
\begin{equation*}
\exists \epsilon >0,\forall x\neq y\in X,\exists n\geq 0,d\left( T^{n}\left(
x\right) ,T^{n}\left( y\right) \right) \geq \epsilon .
\end{equation*}

A dynamical system $\left( X,T\right) $\emph{\ }is transitive if for any
nonempty open sets $U,V\subset A^{\mathbb{Z}}$ there exists $n>0$ with $%
U\cap F^{-n}\left( V\right) \neq \emptyset .$ It is said topologically
mixing if for any nonempty open sets $U,V\subset A^{\mathbb{Z}},U\cap
F^{-n}\left( V\right) \neq \emptyset $ for all sufficiently large $n.$

\subsection{Factors}

Let $(X,T)$ and $(Y,U)$ two dynamical systems and $\pi $ a continuous
surjective map $\pi :X\rightarrow Y$ such that $\pi \circ T=U\circ \pi .$We
say that $\pi $ is a factor map and $(Y,U)$ is a factor of $(X,T).$If $\pi $
is bijective, we say that $\pi $ is a conjugacy and that $(X,T)$ and $(Y,U)$
are conjugate. The conjugacy is an equivalence relation on dynamical systems.%
\begin{equation*}
\begin{tabular}{lll}
$X$ & $\overset{T}{\longrightarrow }$ & $X$ \\ 
$\pi \downarrow $ &  & $\downarrow \pi $ \\ 
$Y$ & $\overset{U}{\longrightarrow }$ & $Y$%
\end{tabular}%
\end{equation*}

The factor relation is transitive in the sense that if $\left( X,T\right)
,\left( Y,U\right) ,\left( Z,V\right) $ are three dynamical systems such
that $\left( Y,U\right) $ is a factor of $\left( X,T\right) $ and $\left(
Z,V\right) $ is a factor of $\left( Y,U\right) $\ then $\left( Z,V\right) $
is a factor of $\left( X,T\right) .$

From the definition the factor of a surjective dynamical system is still
surjective. If $\left( X,T\right) $ is periodic of period $p$ and $\left(
Y,U\right) $ is a factor of $\left( X,T\right) $ are two dynamical systems
such that $\left( Y,U\right) $ is a factor of $\left( X,T\right) $. If $%
\left( X,T\right) $ and $\left( Y,U\right) $ are periodic of respectively
periods $p$ and $q$ then $q$ is a divisor of $p.$

\begin{proposition}
Any dynamical system $\left( X,T\right) $ possesses a maximal equicontinuous
factor $\pi :\left( X,F\right) \rightarrow \left( Y,U\right) ,$ such that
for any equicontinuous factor $\varphi :\left( X,T\right) \rightarrow \left(
Z,V\right) $ there is a unique factor map $\psi :\left( Y,U\right)
\rightarrow \left( Z,V\right) $ with $\psi \circ \pi =\varphi .$ The system $%
\left( Y,U\right) $ is unique up to conjugacy.
\end{proposition}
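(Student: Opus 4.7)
The plan is to construct the maximal equicontinuous factor as the image of $X$ under the diagonal map into a product of all equicontinuous factors. The preliminary step is to observe that the family of equicontinuous factors of $(X,T)$, taken up to conjugacy, is indexed by a set rather than a proper class: each such factor is determined by its associated closed $T$-invariant equivalence relation on $X$, and the closed subsets of $X\times X$ form a set.

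Once this is established, let $\{\varphi_i:(X,T)\to (Y_i,U_i)\}_{i\in I}$ be a representative family, and form the product system $\prod_{i\in I}Y_i$ under the coordinate-wise action of $\prod_{i\in I}U_i$. Define $\Phi:X\to\prod_{i\in I}Y_i$ by $\Phi(x)=(\varphi_i(x))_{i\in I}$; this is continuous, intertwines the actions, and has compact image $Y:=\Phi(X)$, stable under the product action. Denote by $U$ the restriction of that action to $Y$, so that $\Phi:(X,T)\to (Y,U)$ is by construction a factor map.

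For the universal property, given any equicontinuous factor $\varphi:(X,T)\to(Z,V)$, pick an index $i_0\in I$ and a conjugacy $h:(Y_{i_0},U_{i_0})\to (Z,V)$ with $h\circ\varphi_{i_0}=\varphi$. Then $\psi:=h\circ\pi_{i_0}|_Y$ satisfies $\psi\circ\Phi=\varphi$, and $\psi$ is forced to be unique because $\Phi$ is surjective onto $Y$. Uniqueness of $(Y,U)$ up to conjugacy is the usual formal consequence of the universal property: any two systems enjoying it are linked by mutually inverse factor maps.

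The main obstacle is verifying that $(Y,U)$ is itself equicontinuous. When $I$ is finite, equicontinuity transfers routinely from the factors, but for arbitrary $I$ the product need not be metrizable. I would circumvent this in one of two ways: either restrict to a countable subfamily, which is legitimate because $X$ is separable and any continuous image inherits separability, so countably many factors already separate points of $Y$; or argue directly in the uniform structure generated by the finite-coordinate pseudometrics, showing that the family $\{U^n:n\geq 0\}$ is uniformly equicontinuous on the product and then descending to the compact subsystem $Y$, whose induced topology is metrizable. Either route reduces the problem to the finite-product case and completes the proof.
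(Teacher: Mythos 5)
The paper offers no proof of this proposition: it is quoted in the background section as a classical fact (it is the standard existence theorem for the maximal equicontinuous factor, found e.g. in Kůrka's book or Auslander), so there is no in-paper argument to compare yours against. Judged on its own, your construction is essentially correct and is one of the two standard routes (the other being the quotient of $X$ by the smallest closed invariant equivalence relation containing the regionally proximal relation). The set-theoretic reduction via closed invariant equivalence relations, the diagonal embedding into the product, the verification of the universal property using surjectivity of $\Phi$ onto $Y$, and the formal uniqueness argument are all sound.

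The one step you should tighten is the first of your two proposed ways of seeing that $(Y,U)$ is equicontinuous. Separability of $Y$ is not the right hypothesis: a separable subspace of an uncountable product need not have its points separated by countably many coordinates (the full product $[0,1]^{[0,1]}$ is itself separable). What you actually need is that $Y$, being a Hausdorff continuous image of the compact \emph{metrizable} space $X$, is compact metrizable (e.g.\ because $C(Y)$ embeds isometrically into the separable space $C(X)$), hence second countable; from second countability one can extract countably many coordinates generating the topology of $Y$ and so separating its points, after which the countable-product metric argument goes through. Alternatively your second route is already complete as stated: each $(Y_i,U_i)$ is uniformly equicontinuous, uniform equicontinuity passes to the product uniformity because its entourages are generated by finitely many coordinates, and on the compact metrizable subspace $Y$ the subspace uniformity is the one induced by any compatible metric, so $\{U^n:n\ge 0\}$ is equicontinuous there. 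With that repair the proof is complete.
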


\subsection{Symbolic dynamics}

Let $A$ be a finite set called the \emph{alphabet}. A word is a any finite
sequence of elements of $A$.Denote by $A^{\ast }=\cup _{n\in \mathbb{N}%
^{\ast }}A^{n}$ the set of all finite words $u=u_{0}...u_{n-1};$the length
of a word $u\in A^{n}$ is $\left\vert u\right\vert =n.$

Let $A^{\mathbb{Z}}$ denote the set of all functions $x:\mathbb{Z\rightarrow 
}A,$ which we regard as $\mathbb{Z}$-indexed \emph{configurations} of
elements in $A$.

We write such a configuration as $x=\left( x_{n}\right) _{n\in \mathbb{Z}},$
where $x_{n}\in A$ for all $n\in \mathbb{Z}$, and refer to $A^{\mathbb{Z}}$
as \emph{configuration space}.

Treat $A$ as a discrete topological space; then A is compact, so $A^{\mathbb{%
Z}}$ is compact in the Tychonoff product topology. In fact, $A^{\mathbb{Z}}$
is a Cantor space: it is compact, perfect,totally disconnected, and
metrizable.

The standard metric on $A^{\mathbb{Z}}$ is defined by

\begin{equation*}
d\left( x,y\right) =2^{-n}with\text{ }n=mini\geq 0:x_{i}\neq y_{i}\,\mathrm{%
or}\,x_{-i}\neq y_{-i}
\end{equation*}

Let $x$ a configuration of $A^{\mathbb{Z}},$ for two integers $i,j$ with $%
i<j $ we denote by $x\left( i,j\right) \in A^{j-i+1}$ the word $%
x_{i}...x_{j}.$

For any word $u$ we define the cylinder $\left[ u\right] _{l}=\left\{ x\in
A^{\mathbb{Z}}:x\left( l,l+\left\vert u\right\vert -1\right) =u\right\} $
where the word $u$ is at the position $l.$ The cylinder $\left[ u\right]
_{0} $ is simply noted $\left[ u\right] $. The cylinders are clopen (closed
open) sets.

The metric $d$ is non archimedian; that is $d\left( x,y\right) \leq \max
\left\{ d\left( x,z\right) ,d\left( z,y\right) \right\} $. Consequently any
two cylinders either one contain the other or they intersect trivially.

The shift map $\sigma :$ $A^{\mathbb{Z}}\rightarrow $ $A^{\mathbb{Z}}$ is
defined as $\sigma \left( x\right) _{i}=x_{i+1},$ for any $x\in A^{\mathbb{Z}%
}$ and $i\in \mathbb{Z}$. The shift map is a continuous and bijective
function on $A^{\mathbb{Z}}.$ The dynamical system $\left( A^{\mathbb{Z}%
},\sigma \right) $ is commonly called \emph{full shift.}

The configuration $^{\infty }u^{\infty }$ is defined by $\left( ^{\infty
}u^{\infty }\right) _{k\left\vert u\right\vert +i}=u_{i}$ for $k\in \mathbb{Z%
}$ for $k\in \mathbb{Z}$, $0\leq i<\left\vert u\right\vert $ and $u\in
A^{\ast }.$ The configuration $^{\infty }u^{\infty }$ is shift periodic and
is called \emph{spatially periodic} configuration.

A cellular automaton is a continuous map $F:A^{\mathbb{Z}}\rightarrow A^{%
\mathbb{Z}}$ commuting with the shift. By the Curtis-Hedlund Lyndon theorem 
\cite{Hed69} for every cellular automaton $F$ there exist an integer $r$ and
a block map $f$ from $A^{2r+1}$ to $A$ such that $F\left( x\right)
_{i}=f\left( x_{i-r},...,x_{i},...x_{i+r}\right) .$ The integer $r$ is
called the radius of the cellular automaton.

Endowed with the sigma-algebra on $A^{\mathbb{Z}}$ generated by all cylinder
sets and $\nu $ the uniform measure which gives the same probability to
every letter of the alphabet. The uniform measure is invariant if and only
if the cellular automaton is surjective \cite{Hed69}.

\subsection{Equicontinuous and almost equicontinuous points of cellular
automata}

\subsubsection{K\r{u}rka's classification}

K\r{u}rka \cite{Kur03} introduced a topological classification based on the
equicontinuity, sensitiveness and expansiveness properties. The existence of
an equicontinuous point is equivalent to the existence of a blocking word
i.e. a configuration that stop the propagation of the perturbations on the
one dimensional lattice.

\begin{definition}
Let $F$ be a cellular automaton.\newline
1. A point $x$ is an equicontinuous point if : 
\begin{equation*}
\forall \epsilon >0,\exists \delta >0,\forall y:d\left( x,y\right) <\delta
,\forall n\geq 0,d\left( F^{n}\left( y\right) ,F^{n}\left( x\right) \right)
<\epsilon .
\end{equation*}%
\newline
2. We say that $F$ is equicontinuous if every point $x\in A^{\mathbb{Z}}$ is
an equicontinuous point.\newline
3.We say that $F$ is sensitive if for all $x\in A^{\mathbb{Z}}$ we have : 
\begin{equation*}
\exists \epsilon >0,\forall \delta >0,\exists y:d\left( x,y\right) <\delta
,\exists n\geq 0\text{,}d\left( F^{n}\left( y\right) ,F^{n}\left( x\right)
\right) \geq \epsilon .
\end{equation*}
\end{definition}

\begin{definition}
Let $F$ be a cellular automaton. A word $w$ with $\left\vert w\right\vert
\geq s$ is an $s$-blocking word for $F$ if there exists $p\in \left[
0,\left\vert w\right\vert -s\right] $ such that for any $x,y\in \left[ w%
\right] $ we have $F^{n}\left( x\right) \left( p,p+s\right) =F^{n}\left(
y\right) \left( p,p+s\right) $ for all $n\geq 0.$
\end{definition}

\begin{proposition}
Let $F$ be a cellular automaton\ with radius $r>0.$ The following conditions
are equivalent.\newline
1. $F$ is not sensitive. \newline
2. $F$ has an $r-$blocking word.\newline
3. $F$ has some equicontinuous point.
\end{proposition}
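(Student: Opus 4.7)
I would establish the three conditions in a cycle $(2) \Rightarrow (3) \Rightarrow (1) \Rightarrow (2)$, observing first that $(1) \Leftrightarrow (3)$ is essentially tautological under the pointwise formulation of sensitivity used in the paper: the logical negation of ``for every $x$ there is $\epsilon = \epsilon(x) > 0$ such that \ldots'' is precisely the existence of an $x$ for which no such $\epsilon$ works, i.e. an equicontinuity point.

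The main content is $(2) \Rightarrow (3)$. Starting from an $r$-blocking word $w$ with blocking position $p$, I would take the spatially periodic configuration $\bar x = {}^{\infty}w^{\infty}$, in which $w$ appears at infinitely many positions on both sides. To check equicontinuity at $\bar x$, fix $\epsilon = 2^{-N}$, choose occurrences $k_{-} < 0 < k_{+}$ of $w$ in $\bar x$ with $k_{-} + p + r + 1 \leq -N$ and $k_{+} + p - 1 \geq N$, and set $M = \max(|k_{-}|,\, k_{+} + |w|)$. Any $y$ with $d(y, \bar x) < 2^{-M}$ agrees with $\bar x$ on $[-M, M]$, hence contains copies of $w$ at both $k_{-}$ and $k_{+}$. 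I would then prove by induction on $n$ that $F^{n}(y)$ coincides with $F^{n}(\bar x)$ on the entire block $J = [k_{-} + p,\, k_{+} + p + r]$. The base case $n = 0$ is immediate. For the inductive step, the radius-$r$ locality of the local rule gives direct agreement on $J$ shrunk by $r$ on each side, while the blocking property applied at $k_{-}$ and $k_{+}$ gives agreement on the two $(r+1)$-cell windows $[k_{\pm} + p,\, k_{\pm} + p + r]$; these three pieces fit together exactly to recover $J$. Since $[-N, N] \subseteq J$, this yields $d(F^{n}(y), F^{n}(\bar x)) \leq 2^{-N}$ for all $n$.

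The converse $(3) \Rightarrow (2)$ is an inversion of the same idea. Given an equicontinuity point $x$, I would apply the equicontinuity condition with $\epsilon = 2^{-r}$ to obtain $\delta = 2^{-M}$ with $M \geq r$ such that every $y$ agreeing with $x$ on $[-M, M]$ satisfies $F^{n}(y)(-r, r) = F^{n}(x)(-r, r)$ for all $n$. Setting $w = x(-M, M)$, this rewrites as $F^{n}(y_{1})(-r, r) = F^{n}(y_{2})(-r, r)$ for every pair $y_{1}, y_{2} \in [w]_{-M}$. Translating by $M$ via $F \circ \sigma = \sigma \circ F$ converts this into the required $r$-blocking condition on the standard cylinder $[w]$ with blocking position $p = M - r$.

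The only delicate step is the inductive wall argument inside $(2) \Rightarrow (3)$: one must notice that an $(r+1)$-cell blocking window is precisely what is needed to compensate for the $r$-cell loss imposed by one application of the radius-$r$ local rule. Everything else reduces to direct use of the definitions and of the commutation of $F$ with the shift.
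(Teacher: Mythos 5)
Your proof is correct. Note that the paper states this proposition as background from K\r{u}rka's classification and supplies no proof of its own, so there is no in-paper argument to compare against; judged on its own terms, your argument is the standard one and it is complete. Your observation that $(1)\Leftrightarrow(3)$ is a formal negation is valid for the paper's pointwise formulation of sensitivity (where $\epsilon$ is allowed to depend on $x$); be aware that in K\r{u}rka's original statement sensitivity carries a uniform sensitivity constant, and with that definition the equivalence is no longer tautological --- although your $(3)\Rightarrow(2)$ argument still yields $(1)\Rightarrow(2)$ in that setting, since it only uses the single instance $\epsilon=2^{-r}$ of the stability condition at one point. The two substantive implications are handled properly: in $(2)\Rightarrow(3)$ the bookkeeping $k_-+p+r+1\leq -N$ and $k_++p-1\geq N$ places the two blocking windows outside $[-N,N]$, the three intervals $[k_-+p,\,k_-+p+r]$, $[k_-+p+r,\,k_++p]$, $[k_++p,\,k_++p+r]$ do cover $J$ exactly as claimed, and the inequality $p+r\leq |w|$ keeps $J$ inside $[-M,M]$ for the base case; in $(3)\Rightarrow(2)$ the commutation with the shift converts agreement on $[-r,r]$ into agreement on $[p,p+2r]\supseteq[p,p+r]$ with $p=M-r\in[0,|w|-r]$, which is precisely the paper's definition of an $r$-blocking word.
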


\subsubsection{Gilman's classification}

Based on the Wolfram's work \cite{Wol84}, Gilman \cite{Gil87} introduced a
classification using Bernoulli measures which are not necessarily invariant.
He also introduced the concepts of measurable equicontinuous point and of
measurable expansivity.

Cellular automata can then be divided into three classes : cellular automata
with equicontinuous points, with almost equicontinuous points but without
equicontinuous points and almost expansive cellular automata.

In \cite{Tis08} Tisseur extends the Gilman's classification to any shift
ergodic measure and gives an example of a cellular automaton with an
invariant measure which have almost equicontinuous points but without
equicontinuous points.

\begin{definition}
Let $F$ be a cellular automaton and $\left[ i_{1},i_{2}\right] $ a finite
interval of $\mathbb{Z}$. For $x\in A^{\mathbb{Z}}$. We define $B_{\left[
i_{1},i_{2}\right] }\left( x\right) $ by : 
\begin{equation*}
B_{\left[ i_{1},i_{2}\right] }\left( x\right) =\left\{ y\in A^{\mathbb{Z}%
},\forall j:F^{j}\left( x\right) \left( i_{1},i_{2}\right) =F^{j}\left(
y\right) \left( i_{1},i_{2}\right) \right\} .
\end{equation*}
\end{definition}

For any interval $\left[ i_{1},i_{2}\right] $ the relation $\mathfrak{R}$
defined by $x\mathfrak{R}y$ if and only if $\forall j:F^{j}\left( x\right)
\left( i_{1},i_{2}\right) =F^{j}\left( y\right) \left( i_{1},i_{2}\right) $
is an equivalence relation and the sets $B_{\left[ i_{1},i_{2}\right]
}\left( x\right) $ are the equivalence classes.

\begin{lemma}
If $x\in A^{\mathbb{Z}}$ and $n\in \mathbb{N}^{\ast }$ then :\newline
(i) $B_{\left[ -n,n\right] }\left( x\right) $ is closed.\newline
(ii) $F\left( B_{\left[ -n,n\right] }\left( x\right) \right) \sqsubseteq B_{%
\left[ -n,n\right] }\left( F\left( x\right) \right) .$
\end{lemma}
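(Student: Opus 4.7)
The plan is to prove both parts by rewriting the set $B_{[-n,n]}(x)$ as an intersection of preimages of cylinders, then arguing directly from the definition.

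For part (i), I would first observe that for each fixed $j\geq 0$, the condition $F^{j}(y)(-n,n)=F^{j}(x)(-n,n)$ is equivalent to requiring $F^{j}(y)$ to belong to the cylinder $[u]_{-n}$, where $u=F^{j}(x)(-n,n)$. Since cylinders are clopen and $F^{j}$ is continuous (being a composition of the continuous map $F$ with itself), each such preimage $(F^{j})^{-1}([u]_{-n})$ is closed. Writing
\begin{equation*}
B_{[-n,n]}(x)=\bigcap_{j\geq 0}(F^{j})^{-1}\bigl([F^{j}(x)(-n,n)]_{-n}\bigr),
\end{equation*}
exhibits $B_{[-n,n]}(x)$ as an intersection of closed sets, so it is closed. (In fact each preimage is clopen, but that is not needed.)

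For part (ii), I would take any element of $F(B_{[-n,n]}(x))$, writing it as $F(y)$ with $y\in B_{[-n,n]}(x)$, and check membership in $B_{[-n,n]}(F(x))$ straight from the definition. The required equality $F^{j}(F(y))(-n,n)=F^{j}(F(x))(-n,n)$ for every $j\geq 0$ becomes $F^{j+1}(y)(-n,n)=F^{j+1}(x)(-n,n)$, which is just the defining condition for $y\in B_{[-n,n]}(x)$ evaluated at the index $j+1\geq 1$. So the inclusion is immediate.

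There is no real obstacle here; both statements are essentially formal consequences of continuity of $F$ and of the $F$-orbit description of the equivalence classes. The only conceivable subtlety is whether the quantifier $\forall j$ in the definition ranges over $j\geq 0$ or $j\in\mathbb{Z}$; in the latter case part (ii) would need $F$ to be bijective on $B_{[-n,n]}(x)$, but the present formulation with $j\geq 0$ (which is the standard one, consistent with the equicontinuity definitions used earlier in the paper) makes the index-shift argument work without any extra hypothesis on $F$.
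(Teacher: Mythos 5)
Your proof is correct and complete. The paper states this lemma without giving any proof (it is a standard fact going back to Gilman's classification), so there is nothing to compare against; your argument --- writing $B_{[-n,n]}(x)$ as the intersection $\bigcap_{j\geq 0}(F^{j})^{-1}\bigl([F^{j}(x)(-n,n)]_{-n}\bigr)$ of closed preimages of cylinders for (i), and the index shift $j\mapsto j+1$ for (ii) --- is exactly the standard one, and your observation that the quantifier range $j\geq 0$ is what makes (ii) work without any injectivity or surjectivity hypothesis on $F$ is apt.
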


\begin{definition}
Let $\left( F,\mu \right) $ a cellular automaton equipped with a shift
ergodic measure $\mu ,$ a point $x$ is $\mu -$equicontinuous if for any $m>0$
we have :%
\begin{equation*}
\underset{n\rightarrow \infty }{\lim }\frac{\mu \left( \left[ x\left(
-n,n\right) \right] \cap B_{\left[ -m,m\right] }\left( x\right) \right) }{%
\mu \left( \left[ x\left( -n,n\right) \right] \right) }=1.
\end{equation*}

We say that $F$ is $\mu -$almost expansive if there exist $m>0$ such that
for all $x\in A^{\mathbb{Z}}:\mu \left( B_{\left[ -m,m\right] }\left(
x\right) \right) =0.$
\end{definition}

\begin{definition}
Let $\left( F,\mu \right) $ denote a cellular automaton equipped with a
shift ergodic measure $\mu .$ Define classes of cellular automata as follows
:\newline
1- $\left( F,\mu \right) \in \mathcal{A}$ if $F$ is equicontinuous at some $%
x\in A^{\mathbb{Z}}.$\newline
2- $\left( F,\mu \right) \in \mathcal{B}$ if $F$ is $\mu -$almost
equicontinuous at some $x\in A^{\mathbb{Z}}$ but $F\notin \mathcal{A}$.%
\newline
3- $\left( F,\mu \right) \in \mathcal{C}$ if $F$ is $\mu -$almost expansive.
\end{definition}

The following proposition and its proof is a result due to Gilman \cite%
{Gil88} the proof was detailed in \cite{Chem}. 

\begin{proposition}
\label{Gil} Let $F$ be a cellular automaton or radius $r$, If $F$ has an
equicontinuity point $x$ then the sequence of words $\left( F^{i}\left(
x\right) \left( -r,r\right) \right) _{i\geq 0}$ is eventually periodic.
\end{proposition}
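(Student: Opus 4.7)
The plan is to convert temporal equicontinuity at $x$ into a statement about a finite-orbit companion configuration. First, using equicontinuity at $x$ with $\epsilon = 2^{-r}$, I would extract an integer $N$ (which I can take $\geq r$) such that every $y \in A^{\mathbb{Z}}$ with $y(-N,N) = x(-N,N)$ satisfies $d(F^n(x),F^n(y)) < 2^{-r}$ for all $n \geq 0$, i.e.\ $F^n(x)(-r,r) = F^n(y)(-r,r)$ for every $n \geq 0$. In the language of the preceding definition, this means that the cylinder $[x(-N,N)]_{-N}$ is contained in the equivalence class $B_{[-r,r]}(x)$; in particular, the central orbit $(F^n(x)(-r,r))_{n\geq 0}$ is completely determined by what $F$ does to any one configuration inside this cylinder.

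Next I would pick a convenient representative: let $u := x(-N,N) \in A^{2N+1}$ and consider the spatially periodic configuration $z := {}^{\infty} u^{\infty}$, positioned so that $z(-N,N) = u$. Then $z$ belongs to $[x(-N,N)]_{-N}$, so by the previous step $F^n(z)(-r,r) = F^n(x)(-r,r)$ for every $n \geq 0$. It therefore suffices to prove that the central orbit of $z$ under $F$ is eventually periodic in $n$.

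The key observation is that $z$ is $\sigma$-periodic with $\sigma^{2N+1}(z) = z$, and since $F$ commutes with $\sigma$, every iterate $F^n(z)$ is also $\sigma$-periodic of period dividing $2N+1$. But there are at most $|A|^{2N+1}$ configurations in $A^{\mathbb{Z}}$ having shift period dividing $2N+1$, so the forward orbit $\{F^n(z) : n \geq 0\}$ is a finite set. By pigeonhole this orbit is eventually periodic in $A^{\mathbb{Z}}$, hence the sequence of central blocks $(F^n(z)(-r,r))_{n\geq 0}$, and thus $(F^n(x)(-r,r))_{n\geq 0}$, is eventually periodic.

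There is no real obstacle in this plan beyond being careful about the choice of $\epsilon$ and $N$: we need $\epsilon$ small enough to force agreement on positions $-r,\ldots,r$ (any $\epsilon \leq 2^{-r}$ works given the definition of the metric) and $N \geq r$ so that the cylinder $[x(-N,N)]_{-N}$ actually controls the relevant coordinates. The heart of the argument is simply the idea of replacing $x$ by a spatially periodic configuration agreeing with it on the required window, and then invoking the trivial fact that $F$ preserves the (finite) set of configurations of a given shift period.
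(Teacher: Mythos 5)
Your proof is correct, and it takes a genuinely different (and more elementary) route than the paper's. You exploit the full strength of \emph{topological} equicontinuity at $x$: the $\delta$ in the definition hands you an entire cylinder $[x(-N,N)]_{-N}$ contained in the class $B_{[-r,r]}(x)$, and any cylinder visibly contains the spatially periodic configuration $^{\infty}u^{\infty}$; from there the finiteness of the set of configurations of shift period dividing $2N+1$, preserved by $F$ because $F$ commutes with $\sigma$, gives eventual periodicity by pigeonhole. The paper instead works under the weaker hypothesis that $x$ is $\nu$-equicontinuous, i.e.\ only that $\nu\left( B_{[-r,r]}(x)\right) >0$, which does \emph{not} yield a cylinder inside the class; to manufacture a spatially periodic point in $B_{[-r,r]}(x)$ they must invoke shift ergodicity to find a configuration containing two occurrences of the central pattern $w$, then repeatedly insert the word $uw$ to build a sequence $y^{(i)}$ converging to $(uw)^{\infty}$, which lies in the class because the class is closed. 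Both arguments finish the same way (a spatially periodic point has a finite, hence eventually periodic, $F$-orbit, and it shares its central column with $x$). Your approach buys brevity and transparency for the statement as literally phrased; the paper's buys generality (it covers measure-theoretic equicontinuity, which is what is actually needed for the main density result), and its insertion construction is explicitly reused in the proof of the paper's main proposition, so it could not simply be replaced by yours there.
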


\begin{proof}
Let $r$ be the radius of the cellular automaton and let $x$ be a $\nu $%
-equicontinuous point. Then we have $\nu \left( B_{\left[ -r,r\right]
}\left( x\right) \right) >0.$\newline

By the ergodicity of the shift, for a fixed value $p>0$, we have:%
\begin{equation*}
\nu \left( \sigma ^{-p}B_{\left[ -r,r\right] }\left( x\right) \cap \left( B_{%
\left[ -r,r\right] }\left( x\right) \right) \right) >0.
\end{equation*}%
For any $y\in \sigma ^{-p}B_{\left[ -r,r\right] }\left( x\right) \cap \left(
B_{\left[ -r,r\right] }\left( x\right) \right) $ we have:%
\begin{equation*}
\forall k\geq 0:F^{k}\left( y\right) \left( -r,r\right) =F^{k}\left(
y\right) \left( -r+p,r+p\right) .
\end{equation*}

Let us denote $y\left( -r,r\right) =w$ and by $u$ the word between two
occurrences of $w$. Let us also denote by $y^{-}$ the part at left of the
first word $w$ and $y^{+}$ that's at right of the second $w.$

By incorporating the word $uw$ in $y$ between the words $w$ and $u$ we
obtain a new element $y^{\left( 1\right) }$ containing two occurrences of
the word $uw.$

By repeating this process we obtain a sequence $y^{\left( i\right) }$
containing at each iteration one more occurrence of the word $uw.$

By recurrence under the $F-$ action we can show that that $y^{\left(
i\right) }$ still belongs to $B_{\left[ -ij_{1}-\left( i+1\right)
r,ij_{1}+\left( i+1\right) r\right] }\left( x\right) $ for any $i>0.$ 
\newline
For $i=1$ we show that $y^{\left( 1\right) }\in B_{\left[ -j_{1}+2r,j_{1}+2r%
\right] }\left( x\right) .$\newline
As images of the word $y^{\left( 1\right) }\left( j_{1},j_{1}+2r\right) =w$
are independent at right from the infinite column under the word $y^{+}$ and
at left from the infinite column under $wu$. From another side images of the
word $y^{\left( 1\right) }\left( -j_{1}-2r,-j_{1}\right) =w$ depend at left
from the infinite column under $y^{-}$ and at right from the column under $%
wu $.

From the definition of the set $B_{\left[ -r,r\right] }\left( x\right) ,$
images of $x\left( -r,r\right) =w$ are independent from the behavior at left
and right of $w.$So we have : 
\begin{equation*}
\forall i:F^{i}\left( y^{\left( 1\right) }\right) \left( -r,r\right)
=F^{i}\left( y^{\left( 1\right) }\right) \left( -r,r\right) .
\end{equation*}%
\newline
Then we have: $y^{\left( 1\right) }\in B_{\left[ -j_{1}+2r,j_{1}+2r\right]
}\left( x\right) .$

Suppose now that $y^{\left( i\right) }\in B_{\left[ -ij_{1}-\left(
i+1\right) r,ij_{1}+\left( i+1\right) r\right] }\left( x\right) $ we want to
show that $y^{\left( i+1\right) }\in B_{\left[ -\left( i+1\right)
j_{1}-\left( i+2\right) r,\left( i\right) i+1j_{1}+\left( i+2\right) r\right]
}\left( x\right) .$\newline
Same arguments as step $i=1$ lead to conclude that images of the word $%
y^{\left( i\right) }\left( ij_{1},ij_{1}+\left( i+1\right) r\right) =w$
depends at right from the infinite word $y^{+}$ and at left from the word $%
wu $ and from the infinite column below. On another hand images of the word $%
y^{\left( 1\right) }\left( -ij_{1},-ij_{1}-\left( i+1\right) r\right) =w$
depends at left from the infinite word $y^{-}$ and at right from $wu$ and
the infinite column below.\newline
As each element $y^{\left( i\right) }$ belongs to $B_{\left[ -ij_{1}-\left(
i+1\right) r,ij_{1}+\left( i+1\right) r\right] }\left( x\right) $ it belongs
also to $B_{\left[ -r,r\right] }\left( x\right) .$

The sequence of configurations $y^{\left( i\right) }=y^{-}w\underset{%
\leftarrow \text{ }\left( i-1\right) \text{ }\rightarrow }{uw...uw}y^{+}$
containing at each step a one more occurrence of the word $uw$ converge to
the shift periodic configuration $\left( uw\right) ^{\infty }$ which share
with $y$ the same coordinates over $\left( -r,r\right) $.\newline
As the set $B_{\left[ -r,r\right] }\left( x\right) $ is closed the periodic
point $\left( uw\right) ^{\infty }$ is in $B_{\left[ -r,r\right] }\left(
x\right) ,$ the sequence of words $\left( F^{i}\left( x\right) \left(
-r,r\right) \right) _{i\geq 0}$ is then eventually periodic.
\end{proof}

\subsection{Periodic points of cellular automata}

Let $(A^{\mathbb{Z}},F)$ be a cellular automaton. By commutation with the
shift, every shift-periodic point is $F-$eventually periodic hence the set
of eventually periodic points is dense.

Shift periodic points are called spatially periodic and periodic points of a
cellular automata that are not shift periodic are called \emph{strictly\
temporally\ periodic\ points.}

Boyle and Kitchens \cite{BK99} showed that closing cellular automata have a
dense set of periodic points. The same result was obtained by Blanchard and
Tisseur \cite{BT00} for surjective cellular automata with equicontinuity
points.

Acerbi, Dennunzio and Formenti \cite{ADF09} showed that if every mixing
cellular automaton has a dense set of periodic points then every cellular
automaton has a dense set of periodic points.

The question whatever a surjective cellular automaton has a dense set of
periodic points is still an open problem.

The two following results inspired our proposition on the density of \emph{%
strictly\ temporally\ periodic\ points }when the\emph{\ }cellular automaton
has almost equicontinuous points.

\begin{proposition}[\protect\cite{Tis08}]
Let $(A^{\mathbb{Z}},F,\mu )$ be a cellular automaton and $\mu $ an
invariant measure; if $F$ has $\mu -$equicontinuous points then the set of
periodic points is dense in the topological support of $\mu .$
\end{proposition}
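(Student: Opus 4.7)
The plan is to adapt the construction in the proof of Proposition~\ref{Gil} so that it produces $F$-periodic points inside every cylinder of positive $\mu$-measure, with shift-ergodicity of $\mu$ replacing the role played there by the uniform measure. Let $r$ be the radius of $F$ and fix a $\mu$-equicontinuous point $x$. Exactly as in Proposition~\ref{Gil}, $\mu$-equicontinuity at $x$ forces $\mu(B_{[-r,r]}(x))>0$; denote this set by $B$ and write $w:=x(-r,r)$.

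To establish density of $F$-periodic points in $\mathrm{supp}(\mu)$, it suffices to find an $F$-periodic point inside every cylinder $[v]$ with $\mu([v])>0$. Applying Birkhoff's ergodic theorem to the indicator $\mathbf{1}_{B}$ for the shift, we obtain a full $\mu$-measure set $G$ on which $\{k\in\mathbb{Z}:\sigma^k(y)\in B\}$ has asymptotic density $\mu(B)>0$, and in particular is unbounded on both sides. Since $\mu([v])>0$, pick $y\in G\cap[v]$ and then choose integers $p_1\ll 0\ll p_2$, both lying outside the positions occupied by $v$ by a margin of at least $r$, with $\sigma^{p_i}(y)\in B$ for $i=1,2$. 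Set $u:=y(p_1,p_2-1)$, $L:=p_2-p_1$, and let $z:={}^{\infty}u^{\infty}$ be the spatially $L$-periodic configuration that coincides with $y$ on $(p_1,p_2-1)$; then $z\in[v]$. Because spatial periodicity is preserved by $F$, the $F$-orbit of $z$ lies in the finite set of $L$-periodic configurations and hence is eventually periodic in time; taking $k\ge 0$ and $q\ge 1$ with $F^{k+q}(z)=F^k(z)$ yields an $F$-periodic point $F^k(z)$.

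The main obstacle is to show that this periodic point still lies in $[v]$, i.e.\ that the central block is preserved throughout the $F$-orbit of $z$. The idea is to combine the blocking of $y$ at $p_1$ and $p_2$ with the spatial periodicity of $z$ (which replicates potential blocking windows at every $p_i+\ell L$, $\ell\in\mathbb{Z}$) so as to sandwich the coordinates of $v$ between two protecting windows for all time. The subtlety is that $\mu$-equicontinuity provides only measure-theoretic blocking, not the topological blocking word available in Proposition~\ref{Gil}, so one must argue that $\sigma^{p_i}(z)$ still belongs to the $\mathfrak{R}$-equivalence class $B_{[-r,r]}(\sigma^{p_i}(x))$ after the cut-and-paste. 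Executing this step will rely on the closedness of $B$ given by the lemma preceding Proposition~\ref{Gil}, on $B$ being an entire $\mathfrak{R}$-equivalence class, and on the $F$-invariance of $\mu$, combined with an iterative block-insertion argument in the spirit of the construction in Proposition~\ref{Gil}.
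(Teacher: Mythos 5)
Your construction assembles several of the right ingredients (positive measure of the blocking class $B=B_{[-r,r]}(x)$, shift ergodicity to place blocking windows on both sides of an arbitrary cylinder $[v]$ of positive measure, spatial periodization, closedness of the classes), but it has a gap that is more serious than the unexecuted cut-and-paste step you flag at the end. Blocking windows at $p_{1}$ and $p_{2}$ only make the strip between them evolve \emph{autonomously}, i.e.\ independently of the coordinates outside $[p_{1}-r,p_{2}+r]$; they do not freeze its contents. The word $v$ sitting in the middle of the strip is rewritten by $F$ at every step, so your stated goal --- ``the central block is preserved throughout the $F$-orbit of $z$'' --- is not what blocking gives you and is false in general. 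Consequently the eventually periodic point $F^{k}(z)$ you extract lies in the cylinder of the word $F^{k}(z)(0,|v|-1)$, and there is no reason for that word to equal $v$. Nothing in your argument ever forces the central word to \emph{return}; the $F$-invariance of $\mu$, which is precisely the hypothesis designed to produce such a return, is mentioned only in passing and is never applied to a positive-measure set, because you pass to a single configuration $y$ and then to the spatially periodic $z$ before any recurrence argument could be run.

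The missing step is Poincar\'{e} recurrence for $F$. One must first isolate a positive-measure set of the form $[v]\cap\sigma^{-p_{1}}(B)\cap\sigma^{-p_{2}}(B)$ (your Birkhoff argument can be upgraded to give this), then apply Poincar\'{e} recurrence to obtain a point $y$ in that set and an $m\geq 1$ such that $F^{m}(y)$ agrees with $y$ on the whole strip $[p_{1}-r,p_{2}+r]$, and only then periodize spatially; the autonomy of the strip guaranteed by the two blocking columns converts this single return into genuine $F$-periodicity of the periodized configuration, which still contains $v$. This is exactly how the paper proves its own stronger proposition in Section 3.1: shift ergodicity gives $\mu(S)>0$ with $S=B_{[-r,r]}(z)\cap\sigma^{-p}(B_{[-r,r]}(z))$, Poincar\'{e} recurrence gives $x\in S$ and $m$ with $F^{m}(x)_{[-r,r+p]}=x_{[-r,r+p]}$, and the spatially periodic point $(wu)^{\infty}$ built from this data is $F$-periodic and lies in the prescribed cylinder. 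The claim you defer --- that $\sigma^{p_{i}}(z)$ remains in the class $B$ after the cut-and-paste --- is indeed needed and is precisely the iterated insertion argument in the proof of Proposition \ref{Gil}, but it is the smaller of the two holes.
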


\begin{proposition}[\protect\cite{Len12}]
Let $(A^{\mathbb{Z}};F)$ be an almost equicontinuous and surjective CA.
Then, the set of \emph{strictly temporally periodic points} of $F$ is dense.
\end{proposition}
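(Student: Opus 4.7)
The strategy is to use the characterization of almost equicontinuity via blocking words (the K\r{u}rka proposition) together with surjectivity of $F$, and an adaptation of the spatially-periodic-point construction in the proof of Proposition~\ref{Gil}. The main twist needed is to replace a single shift-periodic configuration by a gluing of two distinct ones, in order to produce a configuration that is $F$-periodic but not shift-periodic. Given a nonempty open set $U\subseteq A^{\mathbb{Z}}$, it suffices to find a strictly temporally periodic point in a cylinder $[v]_{k}\subseteq U$. By almost equicontinuity, $F$ is not sensitive and admits an $r$-blocking word $w$; moreover the set of equicontinuous points is dense, so $[v]_{k}$ contains an equicontinuous point $x$. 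Enlarging $v$ if necessary, I assume $v$ contains occurrences of $w$, and Gilman's construction applied to $x$ yields a spatially periodic configuration of the form $(uw)^{\infty}$ lying in $B_{[-r,r]}(x)$.

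The key step is a small modification of that construction that uses two different ``gap words'' $u_{1}\neq u_{2}$ (both compatible with the blocking column of $w$), and performs a single phase change between the associated shift-periodic patterns at a $w$-occurrence located outside the coordinates of $[v]_{k}$. This yields a configuration $z\in[v]_{k}$ whose left tail agrees with $(u_{1}w)^{\infty}$ and whose right tail agrees with $(u_{2}w)^{\infty}$. Because the transition takes place at an occurrence of $w$, the blocking property produces, in every iterate $F^{n}(z)$, an $r$-wide column of frozen values at the transition whose values depend only on the local $w$; as the radius of $F$ is $r$, this column decouples the evolutions of the two tails of $z$ completely, so each tail evolves under $F^{n}$ exactly as the corresponding half-line of $(u_{i}w)^{\infty}$.

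To upgrade the resulting $F$-eventual periodicity into strict $F$-periodicity while remaining inside $[v]_{k}$, surjectivity of $F$ is used to pull the construction back through finitely many iterations: on each decoupled half-line the dynamics reduces to a finite-state system on shift-periodic configurations of bounded period, and surjectivity of $F$ guarantees shift-periodic pre-images at each step on each side; placing oneself sufficiently far back in the pre-image chain lands the construction inside the eventual periodic cycle of $F$. Since the two shift-periodic patterns $(u_{1}w)^{\infty}$ and $(u_{2}w)^{\infty}$ are chosen distinct, the resulting $F$-periodic configuration is not shift-periodic, hence is strictly temporally periodic and lies in $U$.

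The main obstacle is this last step: obtaining strict (rather than merely eventual) $F$-periodicity while staying in the prescribed open set. A shift-periodic configuration of a surjective CA is in general only automatically eventually $F$-periodic, and choosing pre-images under $F^{N}$ that preserve both the gluing structure and the membership in $[v]_{k}$ is the delicate point. The blocking-word decoupling is what makes this step tractable, by reducing everything to finite-state computations on each side of the transition.
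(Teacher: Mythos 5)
Your candidate point --- two distinct spatially periodic tails $(u_{1}w)^{\infty }$ and $(u_{2}w)^{\infty }$ glued at an occurrence of a blocking word $w$ --- is exactly the configuration the paper builds for its own generalization (there it is $y'=(wv)^{\infty }\,wuw\,(uw)^{\infty }$), and the decoupling of the two half-lines by the blocking column is argued in the same way. Note that the paper states this proposition only as a citation of Di Lena--Margara--Dennunzio and does not reprove it; its own proof concerns the generalization to an arbitrary $F$-invariant shift-ergodic measure $\mu $, which specializes to the present statement by taking $\mu =\nu $ the uniform measure, invariant precisely because $F$ is surjective.

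The gap is the step you yourself flag: passing from eventual $F$-periodicity to exact $F$-periodicity while staying in the prescribed cylinder. The argument that surjectivity lets you take shift-periodic pre-images and that placing yourself sufficiently far back in the pre-image chain lands you inside the eventual periodic cycle does not work as stated. Pre-images of a spatially periodic configuration under a surjective CA are spatially periodic, but in general with strictly larger spatial period, so the backward orbit need not stay in a finite set and never has to enter a cycle; indeed a spatially periodic point of a surjective CA need not be $F$-periodic at all (for the XOR automaton $F(x)_{i}=x_{i}+x_{i+1}$ the configuration ${}^{\infty }1^{\infty }$ maps to the fixed point ${}^{\infty }0^{\infty }$ and never returns). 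Moreover, even if both tails were exactly $F$-periodic, you would still need the finite central block carrying the prescribed word $v$ to return exactly to itself, which is again a recurrence statement and not a consequence of surjectivity alone. The missing ingredient, which is how both this paper and the original proof proceed, is measure-theoretic: surjectivity makes the uniform measure $\nu $ invariant, the set $S=B_{[-r,r]}(z)\cap \sigma ^{-p}\left( B_{[-r,r]}(z)\right) $ has positive measure, and the Poincar\'{e} recurrence theorem supplies $m$ and $x\in S$ with $F^{m}(x)_{[-r,r+p]}=x_{[-r,r+p]}$; this exact return of the central column is what propagates through the blocking columns to give $F^{m}(y')=y'$ for the glued configuration. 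Without some substitute for this recurrence argument your proof does not close.
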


\begin{proposition}[\protect\cite{Len12}]
Let $(A^{\mathbb{Z}};F)$ be an expansive CA. Then, the set of \emph{strictly
temporally periodic points} of $F$ is empty.
\end{proposition}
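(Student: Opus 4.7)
The plan is to argue by contrapositive: every $F$-periodic point must be shift-periodic, so no strictly temporally periodic point can exist. Suppose $x \in A^{\mathbb{Z}}$ satisfies $F^{p}(x)=x$ for some $p>0$. Because $F$ commutes with $\sigma$, we have $F^{p}(\sigma^{n}(x)) = \sigma^{n}(F^{p}(x)) = \sigma^{n}(x)$ for every $n \in \mathbb{Z}$, so the full shift-orbit $\{\sigma^{n}(x) : n \in \mathbb{Z}\}$ lies inside the fixed set $\mathrm{Fix}(F^{p}) = \{y \in A^{\mathbb{Z}} : F^{p}(y)=y\}$. If we can show that $\mathrm{Fix}(F^{p})$ is finite, then this shift-orbit is finite, which means $x$ is shift-periodic, contradicting the definition of a strictly temporally periodic point.

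The main step is therefore to prove that a positively expansive continuous self-map of the compact space $A^{\mathbb{Z}}$ has only finitely many periodic points of any given period. Let $\epsilon>0$ be an expansivity constant for $F$. Fix $y_{0}\in \mathrm{Fix}(F^{p})$. By continuity of the finitely many maps $F^{0},F^{1},\ldots ,F^{p-1}$, there exists $\delta >0$ such that $d(y_{0},y)<\delta $ implies $d(F^{k}(y_{0}),F^{k}(y))<\epsilon $ for every $k\in\{0,1,\ldots ,p-1\}$. If in addition $y\in \mathrm{Fix}(F^{p})$, both orbits are $p$-periodic, so $F^{n}(y_{0})=F^{n \bmod p}(y_{0})$ and $F^{n}(y)=F^{n \bmod p}(y)$, and hence $d(F^{n}(y_{0}),F^{n}(y))<\epsilon $ for every $n\geq 0$. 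Expansivity then forces $y=y_{0}$, so $y_{0}$ is isolated in $\mathrm{Fix}(F^{p})$. Since $\mathrm{Fix}(F^{p})$ is closed (as the preimage of the diagonal under the continuous map $y\mapsto (y,F^{p}(y))$) and discrete, compactness of $A^{\mathbb{Z}}$ makes it finite.

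Combining the two steps closes the proof: for any $F$-periodic point $x$ of period $p$, the shift-orbit of $x$ is a subset of the finite set $\mathrm{Fix}(F^{p})$, so $x$ is spatially periodic, and no strictly temporally periodic point can exist. The only delicate point is that the constant $\delta $ depends on both $y_{0}$ and $p$, but since we need discreteness of $\mathrm{Fix}(F^{p})$ only for each fixed $p$ separately, no uniformity in $p$ is needed. The real content of the argument is the commutation identity $F^{p}\circ \sigma ^{n}=\sigma ^{n}\circ F^{p}$; expansivity and compactness then supply the finiteness that forces spatial periodicity.
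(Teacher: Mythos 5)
Your proof is correct. Note that the paper does not actually prove this proposition: it is quoted as background from \cite{Len12}, so there is no internal proof to compare against. Your argument is the standard one and is complete: the commutation $F^{p}\circ\sigma^{n}=\sigma^{n}\circ F^{p}$ places the entire shift-orbit of an $F$-periodic point inside $\mathrm{Fix}(F^{p})$; positive expansivity (which is exactly the paper's definition, with $n\geq 0$) together with continuity of $F^{0},\dots,F^{p-1}$ makes each point of $\mathrm{Fix}(F^{p})$ isolated, and a closed discrete subset of the compact space $A^{\mathbb{Z}}$ is finite; a configuration with a finite shift-orbit is spatially periodic, hence not strictly temporally periodic. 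All steps are justified, including the observation that $\delta$ may depend on $y_{0}$ and $p$ since only pointwise isolation within each fixed $\mathrm{Fix}(F^{p})$ is needed.
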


\section{Results}

\subsection{Strictly temporally periodic points}

In this section we extend a result of Lena, Margara and Dennunzio about the
density of the set of strictly temporally periodic to cellular automata with
almost equicontinuous points.

\begin{proposition}
Let $(A^{\mathbb{Z}},F)$ be a cellular automaton and $\mu $ an $F-$invariant
and shift ergodic measure.\newline
If $F$ has almost equicontinuous points then the set of strictly temporally
periodic points of $F$ is dense in the topological support of $\mu .$
\end{proposition}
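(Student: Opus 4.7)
The plan is to splice two distinct shift-periodic \(F\)-periodic configurations, both containing the blocking word \(w\), along a common occurrence of \(w\). The blocking property of \(w\) preserves \(F\)-periodicity of the splice, while the two different periodic skeletons defeat shift-periodicity.

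From Proposition 4 and the almost equicontinuity hypothesis we obtain an \(r\)-blocking word \(w\) with blocking offset \(p\). Under the \(F\)-invariance and shift-ergodicity of \(\mu\), Tisseur's proposition (recalled above) gives density of \(F\)-periodic points in \(\mathrm{supp}(\mu)\); its proof, modelled on the Gilman construction recalled in Proposition \ref{Gil}, produces these periodic points as \(F\)-iterates of shift-periodic configurations of the form \((u'w)^{\infty}\), so in particular they are themselves shift-periodic. Fix an \(F\)-periodic \(x_0=(u_1 w)^{\infty}\in[v]\) of \(F\)-period \(T\); applying the same density in another cylinder of \(\mathrm{supp}(\mu)\) that forces a different periodic skeleton, fix a second \(F\)-periodic \(y_0=(u_2 w)^{\infty}\) of \(F\)-period \(T'\) that is shift-inequivalent to \(x_0\). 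By shifting \(y_0\), we arrange that both \(x_0\) and \(y_0\) carry \(w\) at a common position \(a\) with \(a+|w|-1<0\), lying strictly to the left of the window \([0,|v|-1]\).

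Define \(z\) by \(z_i=(y_0)_i\) for \(i<a+p\); \(z_i=w_{i-a}\) for \(a+p\le i<a+p+r\) (the blocking column of \(w\), common to both configurations); and \(z_i=(x_0)_i\) for \(i\ge a+p+r\). Then \(z\in[v]\) because the splice lies entirely to the left of position \(0\) and \(z\) coincides with \(x_0\) on \([0,|v|-1]\). By the blocking property of \(w\), the action of \(F\) on \(z\) decouples into three independent pieces: the left tail (evolving exactly as in \(y_0\), with \(F\)-period \(T'\)), the static blocking column, and the right tail (evolving exactly as in \(x_0\), with \(F\)-period \(T\)). Hence \(F^{\mathrm{lcm}(T,T')}(z)=z\). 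If \(z\) were shift-periodic with some period \(q\), comparing \(z\) with \(y_0\) on \(i\ll 0\) and with \(x_0\) on \(i\gg 0\) would force \((y_0)_i=(x_0)_{i+q}\) on a long range, and the shift-periodicity of \(y_0\) and \(x_0\) would extend this identity to all of \(\mathbb{Z}\), making \(y_0\) a shift of \(x_0\) and contradicting our choice; hence \(z\) is strictly temporally periodic and lies in \([v]\).

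The main obstacle is securing the auxiliary \(y_0\) shift-inequivalent to \(x_0\), with \(w\) appearing in both. This reduces to checking that Tisseur's density theorem, applied in different cylinders of \(\mathrm{supp}(\mu)\), produces shift-periodic \(F\)-periodic configurations in distinct shift-orbits, and that the Gilman-style construction can be arranged to force \(w\) into the periodic skeleton at a prescribed position; the argument goes through whenever \(\mathrm{supp}(\mu)\) is not reduced to a single shift-orbit, which is the non-degenerate case of the proposition. Everything else in the construction is a direct consequence of the blocking property of \(w\) and Tisseur's density theorem.
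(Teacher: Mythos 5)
There is a genuine gap, and it sits at the very first step. You deduce from ``almost equicontinuous points'' the existence of an $r$-blocking word $w$ via Proposition~4 and then run the entire construction on the topological blocking property of $w$ (``the action of $F$ on $z$ decouples into three independent pieces''). But Proposition~4 says that a blocking word exists if and only if $F$ is \emph{not sensitive}, i.e.\ if and only if $F$ has a topological equicontinuity point. The hypothesis here is only that $F$ has $\mu$-almost equicontinuous points, and the whole point of the proposition is to cover class $\mathcal{B}$: sensitive cellular automata with $\mu$-almost equicontinuous points but \emph{no} equicontinuous points and hence \emph{no} blocking word (the example following the proof is exactly such a CA). In that case the word $w=z(-r,r)$ does not block: an arbitrary configuration containing $w$ need not have its central column evolve independently of the tails. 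The correct substitute is measure-theoretic: one works with the positive-measure sets $B_{[-r,r]}(z)$, and the fact that a spliced configuration has a decoupled central column must be \emph{proved} by showing it still lies in $S=B_{[-r,r]}(z)\cap\sigma^{-p}B_{[-r,r]}(z)$. This is what the paper does, by an induction on finite approximations $y^{(i)}=(wv)^i\,wuw\,(uw)^i$ together with the closedness of $S$; none of that can be replaced by ``$w$ is blocking'' in the sensitive case.

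A second, smaller gap is the provenance of your two configurations $x_0=(u_1w)^{\infty}$ and $y_0=(u_2w)^{\infty}$: you need them $F$-periodic, shift-inequivalent, both carrying $w$ at a prescribed common position, and both inside the relevant $B$-class, and you defer all of this to an unproved strengthening of Tisseur's theorem. The paper avoids this entirely: it produces a \emph{single} point $x$ in the positive-measure set $S$ whose central window is exactly $F^m$-periodic via the Poincar\'e recurrence theorem (this is where the $F$-invariance of $\mu$ is actually used in the argument, not merely through a cited density result), reads off $w=x_{[-r,r]}$ and the connecting word $u=x_{[r,-r+p]}$, takes any $v\neq u$ of the same length, and splices $(wv)^{\infty}$ against $(uw)^{\infty}$ across the common $w$. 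So while your splicing idea is the same in spirit as the paper's, the two ingredients that make it work under the actual hypotheses --- membership in the sets $B_{[-r,r]}$ in place of a blocking word, and Poincar\'e recurrence in place of a black-box density theorem --- are missing.
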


\begin{proof}
Let $r$ be the radius of the cellular automaton and $z$ an almost
equicontinuous point, so $\mu (B_{[-r,r]}(z))>0$. Since $\mu $ is a shift
ergodic measure $\exists p>2r+1$ such that :%
\begin{equation*}
\mu (S=B_{[-r,r]}(z)\cap \sigma ^{-p}(B_{[-r,r]}(z)))>0.
\end{equation*}%
From the Poincar\'{e} recurrence theorem, there exists $m\in \mathbb{N^{\ast
}}$ and $x\in S$ such that\newline
$F^{m}(x)_{[-r,r+p]}=x_{[-r,r+p]}$. Set $%
w=z_{[-r,r]}=x_{[-r,r]}=x_{[-r+p,r+p]}$ and $u=x_{[r,-r+p]}$.\newline
Since $p-2r\geq 2$ $\exists v\neq u$ such that $|wv|=|wu|$.\newline
From the proof of proposition $\ref{Gil}$ the shift periodic points $%
a=(wv)^{\infty }$, $b=(wu)^{\infty }$ and the point $y$ such that\newline
$y_{]-\infty ,-r[}=(wv)^{\infty }$ $y_{[-r,+\infty \lbrack }=(wu)^{\infty }$
belongs to $S$.

Let us define the sequence $(y^{(i)})_{i\in \mathbb{N}}$ by: 
\begin{equation*}
\left\{ 
\begin{array}{l}
y^{(0)}=y_{[-r,r+p]}=wuw \\ 
y^{(i)}=y_{[-r-ip,r+(i+1)p]}=(wv)^{i}\;wuw\;(uw)^{i}%
\end{array}%
\right.
\end{equation*}%
\begin{equation*}
\FRAME{itbpF}{5.2849in}{2.5097in}{0in}{}{}{fig2.png}{\special{language
"Scientific Word";type "GRAPHIC";maintain-aspect-ratio TRUE;display
"USEDEF";valid_file "F";width 5.2849in;height 2.5097in;depth
0in;original-width 8.7605in;original-height 4.1355in;cropleft "0";croptop
"1";cropright "1";cropbottom "0";filename 'Fig2.PNG';file-properties
"XNPEU";}}
\end{equation*}%
We need to show by induction that $y^{(i)}\in B_{[-r-(i-1)p,r+ip]}(y)$ $%
\forall i\in \mathbb{N^{\ast }}$\newline
For $i=1$ , Since $y_{[-r-p,r+2p]}^{(1)}=wv\;wuw\;uw$ so $%
y_{[-r,r+p]}^{(1)}=y_{[-r,r+p]}=wuw$. Suppose that for $j>0$ one has $%
F^{t}(y^{(1)})_{[-r,r+p]}=F^{t}(y)_{[-r,r+p]}$ for $0\leq t\leq j$. In this
case 
\begin{align*}
F^{j+1}(y^{(1)})_{[-r,r+p]}& =f(F^{j}(y^{(1)})_{[-2r,2r+p]}) \\
& =f(F^{j}(y^{(1)})_{[-2r,-r]}F^{j}(y)_{[-r,r+p]}F^{j}(y^{(1)})_{[r+p,r+2p]})
\\
& =F^{j+1}(y)_{[-r,r+p]}
\end{align*}%
We can conclude that $(F^{j}(y^{(1)})_{[-r,r+p]})_{j\in \mathbb{N}%
}=(F^{j}(y)_{[-r,r+p]})_{j\in \mathbb{N}}$ which implies that $y^{(1)}\in
B_{[-r,r+p]}(y)$\newline
Next, assuming that $y^{(i-1)}\in B_{[-r-(i-2)p,r+(i-1)p]}(y)$ and using the
same arguments as in step $i=1$ we show that $%
(F^{j}(y^{(i)})_{[-r-(i-1)p,r+ip]})_{j\in \mathbb{N}%
}=(F^{j}(y)_{[-r-(i-1)p,r+ip]})_{j\in \mathbb{N}}$ which implies that $%
y^{(i)}\in B_{[-r-(i-1)p,r+ip]}(y)$.\newline
Since for all $i\in \mathbb{N}$ $y^{(i)}\in B_{[-r-(i-1)p,r+ip]}(y)$ it
follows that :%
\begin{equation*}
y^{(i)}\in B_{[-r,r+p]}(y)=B_{[-r,r]}(z)\cap \sigma ^{-p}(B_{[-r,r]}(z))=S
\end{equation*}%
The sequence $(y^{i})_{i\in \mathbb{N}}$ of points of $S$ converge to $%
y^{^{\prime }}=(wv)^{\infty }wuw(uw)^{\infty }$. Since $S$ is closed $%
y^{^{\prime }}\in S$ and since the $F$ orbit of each point in $S$ share the
same central coordinates, it follows that $F^{m}(y^{^{\prime
}})_{[-r,r+p]}=x_{[-r,r+p]}=y_{[-r,r+p]}^{^{\prime }}=wuw$ which implies
that $F^{m}(y^{^{\prime }})_{[-r,+\infty \lbrack }=y_{[-r,+\infty \lbrack
}^{^{\prime }}$.\newline
Using the same arguments we show that there exists a point $x^{^{\prime
}}\in B_{[-r-p,r-p]}\cap \sigma ^{-p}(B_{[-r-p,r-p]})$ such that 
\begin{equation*}
F^{m}(y^{^{\prime }})_{[-r-p,r]}=x_{[-r-p,r]}^{^{\prime
}}=y_{[-r-p,r]}^{^{\prime }}=wvw
\end{equation*}%
which implies that $F^{m}(y^{^{\prime }})_{]-\infty ,-r]}=y_{]-\infty
,-r]}^{^{\prime }}$. It follows that $F^{m}(y^{^{\prime }})=y^{^{\prime }}$
and permit to conclude.
\end{proof}

\begin{example}[\protect\cite{Gil87}]
Consider the following cellular automaton $\left( \mathbf{B}^{\mathbb{Z}%
},F\right) $ here $\mathbf{B=}\left\{ \square ,\swarrow ,\downarrow \right\} 
$\newline
Here the particle $\square $ is considered as background , the particle $%
\swarrow $ moves to the left and the particle $\downarrow $ moves downward.%
\newline
When a $\swarrow $ and $\downarrow $ meets the collision produce a $\square $
(mutual annihilation). The local rule is given in the following table 
\begin{equation*}
\begin{tabular}{|c|c|c|c|c|c|c|c|c|}
\hline
$\ast \square \square $ & $\ast \square \swarrow $ & $\ast \square
\downarrow $ & $\ast \swarrow \square $ & $\ast \swarrow \swarrow $ & $\ast
\swarrow \downarrow $ & $\ast \downarrow \square $ & $\ast \swarrow
\downarrow $ & $\ast \downarrow \downarrow $ \\ \hline
$\square $ & $\swarrow $ & $\square $ & $\square $ & $\swarrow $ & $\square $
& $\downarrow $ & $\square $ & $\downarrow $ \\ \hline
\end{tabular}%
\end{equation*}%
This cellular automaton is in category $\mathcal{B}$ hence sensitive.
\end{example}

\subsection{Periodic factors}

\begin{proposition}
Let $(A^{\mathbb{Z}},F)$ be a cellular automaton with equicontinuous points
then $F$ has as factor at least a periodic factor.
\end{proposition}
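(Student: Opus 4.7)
The plan is to produce an $F$-periodic orbit from the equicontinuity hypothesis via Kurka's blocking word, and then to realize it as a periodic factor using the one-sided symbolic column map.

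First, I would invoke Kurka's equivalence (the proposition of Section~2.4.1): since $F$ admits an equicontinuous point, it is not sensitive and therefore has an $r$-blocking word $w$. Consider the spatially periodic configuration $y={}^{\infty}w^{\infty}$, of shift-period $|w|$. Because $F$ commutes with $\sigma$, it sends the finite set of spatially $|w|$-periodic configurations into itself, so the $F$-orbit of $y$ is eventually periodic: there exist $m\ge 0$ and $q\ge 1$ with $F^{m+q}(y)=F^{m}(y)$. Setting $z=F^{m}(y)$ gives an $F$-periodic configuration of period dividing $q$, and its orbit $\mathcal{O}=\{z,F(z),\dots,F^{q-1}(z)\}$ is a finite $F$-invariant set on which $F$ acts as a cyclic permutation of period dividing $q$.

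Second, I would construct the factor using the column map $\pi\colon A^{\mathbb{Z}}\to (A^{2r+1})^{\mathbb{N}}$ defined by $\pi(x)_{n}=F^{n}(x)(-r,r)$. This $\pi$ is continuous and satisfies $\pi\circ F=\sigma'\circ\pi$, where $\sigma'$ is the one-sided shift on $(A^{2r+1})^{\mathbb{N}}$, so $(Z,\sigma')$ is a factor of $(A^{\mathbb{Z}},F)$, where $Z=\overline{\pi(A^{\mathbb{Z}})}$. Using Proposition~\ref{Gil} together with Kurka's dichotomy (equicontinuous points are dense once one exists), I would show that $\pi(x)$ is eventually $\sigma'$-periodic for a dense set of $x$, with a common period $q$ coming from Step~1 and a uniform transient $N$. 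Composing $\pi$ with $(\sigma')^{N}$ then gives an $F$-equivariant continuous surjection $\tilde\pi=(\sigma')^{N}\circ\pi$ onto the eventual image $Z^{\ast}\subset Z$, on which $(\sigma')^{q}=\mathrm{id}$. The pair $(Z^{\ast},\sigma'|_{Z^{\ast}})$ is then a periodic factor of $(A^{\mathbb{Z}},F)$ of period dividing $q$.

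The main obstacle I anticipate is the uniformity of the period and the transient — i.e.\ that a single $q$ and a single $N$ suffice for every equicontinuous $x$, and by closure for every $x\in A^{\mathbb{Z}}$. The blocking property of $w$ yields this directly on the cylinder $[w]_{0}$, since the central column there coincides with that of $y={}^{\infty}w^{\infty}$ and hence inherits its period and transient; extending to all of $A^{\mathbb{Z}}$ should follow from the density of cylinders $[w]_{k}$ (guaranteed in the non-sensitive regime) together with continuity of $\pi$ and compactness of $Z$. If this uniform extension fails, one can at worst restrict to the closed $F$-invariant subsystem generated by configurations with sufficiently many occurrences of $w$, which still carries a non-trivial periodic factor built from the finite orbit $\mathcal{O}$ of Step~1.
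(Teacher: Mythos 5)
There is a genuine gap in Step~2, and it is exactly at the point you flag as the ``main obstacle'': the uniformity claim is false, and without it the construction does not produce a periodic factor of $(A^{\mathbb{Z}},F)$. For a cellular automaton with equicontinuous points but which is not equicontinuous, the central column $\pi(x)=(F^{n}(x)(-r,r))_{n\geq 0}$ is eventually $\sigma'$-periodic only on the residual set of configurations having occurrences of the blocking word on both sides of the origin; there are configurations whose central column is not eventually periodic at all (think of the paper's particle example, with particles reaching the origin at a non-periodic sequence of times). Even on the good set, neither the transient $N$ nor the eventual period is uniform: if the nearest occurrences of $w$ are at positions $\pm M$, the column can take on the order of $|A|^{2M}$ steps to become periodic and its period grows with $M$. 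So no single $N$ and $q$ make $(\sigma')^{N}\circ\pi$ land in a set on which $(\sigma')^{q}=\mathrm{id}$, and the appeal to ``closure and compactness'' cannot repair this, since eventual periodicity is not a closed condition. Your Step~1 is correct but by itself only produces a finite $F$-periodic \emph{orbit}, i.e.\ a subsystem, not a factor: a factor map must be defined and surjective from the whole space (or at least from whatever system one declares as the source), so the existence of $\mathcal{O}$ does not conclude.

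Your fallback --- restrict to a suitable closed set of configurations containing the blocking word --- is in fact what the paper actually does: it fixes an equicontinuity point $x$, uses Proposition~\ref{Gil} to get a preperiod $m$ and period $p$ for the word sequence $(F^{i}(x)(-r,r))_{i}$, forms the union of cylinders $W=\cup_{k=m}^{m+p-1}[F^{k}(x)(-r,r)]$, and exhibits $(\mathbb{Z}/p\mathbb{Z},\,+1\bmod p)$ as a factor of the \emph{restriction} $(W,F)$ by sending $W_{k}$ to $k-m$. So the proposition, as proved in the paper, is really about a subsystem, not about $(A^{\mathbb{Z}},F)$ itself, and your fallback gestures at the same construction. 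But as written your fallback is not a proof: you would still need to specify the invariant subsystem precisely, define the factor map on it, and verify equivariance --- and you should be aware that the uniform-period difficulty above resurfaces there too (the subsystem ``generated by configurations with sufficiently many occurrences of $w$'' does not have a single column period). The honest fix is to follow the paper and phrase the conclusion for the restriction of $F$ to the finitely many cylinders determined by one equicontinuity point, where a single $p$ is available by construction.
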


\begin{proof}
As $F$ has some equicontinuous point then there is a class $B_{\left[ -n,n%
\right] }\left( x\right) $ with $B_{\left[ -n,n\right] }\left( x\right)
^{\circ }\neq \varnothing .$\newline

By lemma \ref{Gil} the sequence of words $\left( F^{i}\left( x\right) \left(
-r,r\right) \right) _{i\geq 0}$ is eventually periodic, then there exist a
preperiod $m$ and a period $p$ such that for all $i\in \mathbb{N}:$ 
\begin{equation*}
F^{m}\left( x\right) \left( -r,r\right) ^{\infty }=F^{m+ip}\left( x\right)
\left( -r,r\right) ^{\infty }.
\end{equation*}%
\newline
Let us define the family of sets $W_{k}=\left[ F^{k}\left( x\right) \left(
-r,r\right) \right] $ with $0\leq k\leq m+p-1.$

Consider the set $W=\cup _{m}^{m+p-1}W_{k}$ we will show that the
restriction of $F$ to $W$ has a periodic factor.

Let the function $\pi $ defined from $W$ to $\mathbb{Z}/p\mathbb{Z}$ by :%
\begin{equation*}
\pi \left( x\right) =k-m:x\in W_{k}:m\leq k\leq m+p-1
\end{equation*}%
Define the periodic dynamical systems $\left( \mathbb{Z}/p\mathbb{Z},\left(
x+1\right) \func{mod}p\right) ,$ we have 
\begin{equation*}
\pi \left( F\left( x\right) \right) =\left\{ 
\begin{array}{l}
k-m+1\text{ if }x\in W_{k}\text{ and }m\leq k\leq m+p-2 \\ 
1\text{if }x\in W_{m+p-1}%
\end{array}%
\right. =P\left( \pi \left( x\right) \right)
\end{equation*}%
Thus $\left( \mathbb{Z}/p\mathbb{Z},\left( x+1\right) \func{mod}p\right) $
is a periodic factor of $\left( W,F\right) .$
\end{proof}

\begin{example}
Non surjective CA with equicontinuous points \newline
$\left( \mathbf{3}^{\mathbb{Z}},F\right) $ where $\mathbf{3}=\left\{
w,0,r\right\} $ the rules are defined by 
\begin{equation*}
\begin{tabular}{|l|l|l|l|l|l|l|l|l|l|}
\hline
$x_{i-1}x_{i}$ & $wr$ & $w0$ & $ww$ & $rr$ & $r0$ & $rw$ & $0r$ & $00$ & $0w$
\\ \hline
$F\left( x\right) _{i}$ & ~~$r$ & ~~$0$ & ~~$w$ & ~$0$ & ~$r$ & ~$w$ & ~$0$
& ~$0$ & ~$w$ \\ \hline
\end{tabular}%
\end{equation*}%
This cellular automaton is not surjective furthermore we have $F\left( \left[
w000w\right] \right) =F\left( \left[ w00rw\right] \right) =\left[ w000w%
\right] $ hence $\left( \mathbf{3}^{\mathbb{Z}},F\right) $ is not injective.%
\newline
Consider the cylinder $\left[ wr000w\right] $ we have 
\begin{equation*}
\begin{tabular}{l|l|llllll|l}
$x$ & $...$ & $w$ & $r$ & $0$ & $0$ & $0$ & $w$ & $...$ \\ 
$F\left( x\right) $ & $...$ & $w$ & $r$ & $r$ & $0$ & $0$ & $w$ & $...$ \\ 
$F^{2}\left( x\right) $ & $...$ & $w$ & $r$ & $0$ & $r$ & $0$ & $w$ & $...$
\\ 
$F^{3}\left( x\right) $ & $...$ & $w$ & $r$ & $r$ & $0$ & $r$ & $w$ & $...$
\\ 
$F^{4}\left( x\right) $ & $...$ & $w$ & $r$ & $0$ & $r$ & $0$ & $w$ & $...$%
\end{tabular}%
\end{equation*}%
\newline
Denote by $W=\left[ wr0r0w\right] \cup \left[ wrr0rw\right] $ then the
restriction of $F$ to the set $W$ has a periodic factor.
\end{example}

\begin{proposition}
Let $(A^{\mathbb{Z}},F)$ be a surjective cellular automaton with
equicontinuous points but without being equicontinuous; then the set of
equicontinuous factors contain an infinite union of equivalence classes $%
\widetilde{\mathit{p}}$.
\end{proposition}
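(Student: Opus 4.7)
The plan is to iterate the construction of the previous proposition. Each equicontinuous point $x$ supplies, via the eventual period $p=p(x)$ of its central column $(F^{i}(x)(-r,r))_{i\geq 0}$, a factor map from an invariant clopen set onto the cyclic system $(\mathbb{Z}/p\mathbb{Z},\,k\mapsto k+1)$; finite cyclic systems are trivially equicontinuous. Two cyclic systems of distinct periods are non-conjugate, so it suffices to exhibit equicontinuous points whose central-column periods realize infinitely many distinct values: each value then contributes its own equivalence class $\widetilde{p}$ of equicontinuous factors, producing the required infinite union.

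Such points are abundant. The hypothesis that $F$ has equicontinuous points is equivalent to the existence of an $r$-blocking word $w$, and for every admissible filling word $u$ the shift-periodic configuration $z_{u}=(wu)^{\infty}$ is then an equicontinuous point to which the argument of Proposition~\ref{Gil} applies. Since $F$ is surjective and commutes with $\sigma$, its restriction to the finite $F$-invariant set of spatially $n$-periodic configurations is a bijection, so $z_{u}$ is itself $F$-periodic and $p(z_{u})$ divides its $F$-temporal period. Hence the set of realized central-column periods contains the divisors of the $F$-temporal periods on spatially periodic configurations of every spatial period $n$.

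The main obstacle is to prove that infinitely many distinct periods actually occur. I would argue by contradiction: suppose some $P$ satisfies $p(x)\mid P$ for every equicontinuous point $x$. Then $F^{P}$ fixes the central column of each $z_{u}$, and by shift-commutation this upgrades to $F^{P}(z_{u})=z_{u}$ for every spatially periodic $z_{u}$. Spatially periodic configurations are dense in $A^{\mathbb{Z}}$ and $F^{P}$ is continuous, so $F^{P}=\mathrm{id}$. A cellular automaton of finite order is equicontinuous, contradicting the hypothesis that $F$ is not equicontinuous. Thus infinitely many periods $p$ arise, yielding the asserted infinite union of conjugacy classes $\widetilde{p}$ of periodic (hence equicontinuous) factors.
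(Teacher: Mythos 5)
Your overall strategy is the same as the paper's: assume only finitely many periods occur, set $P$ equal to their least common multiple, show that $F^{P}$ fixes a dense family of spatially periodic configurations built from a blocking word $w$, and conclude $F^{P}=\mathrm{id}$, contradicting the assumption that $F$ is not equicontinuous. The gap is in the step where you pass from eventual periodicity of the central column of $z_{u}=(wu)^{\infty}$ to genuine $F$-periodicity of $z_{u}$. You justify this by asserting that, since $F$ is surjective and commutes with $\sigma$, its restriction to the finite set $P_{n}$ of spatially $n$-periodic configurations is a bijection. That assertion is false: a surjective cellular automaton maps $P_{n}$ into $P_{n}$, but need not map it onto $P_{n}$ --- preimages of a $\sigma^{n}$-periodic point are only guaranteed to be $\sigma^{kn}$-periodic for some $k\geq 1$. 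The XOR automaton $F(x)_{i}=x_{i}+x_{i+1}\bmod 2$ is surjective yet sends both shift-fixed points to $0^{\infty}$, so $1^{\infty}$ is eventually $F$-periodic but not $F$-periodic. Hence your $z_{u}$ may fail to be $F$-periodic, and $F^{P}(z_{u})=z_{u}$ does not follow.

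This matters because the weaker statement you can actually extract --- $F^{m_{u}+P}(z_{u})=F^{m_{u}}(z_{u})$ for some preperiod $m_{u}$ --- does not yield $F^{P}=\mathrm{id}$ on a dense set: the preperiod $m_{u}$ is controlled only through the cardinality of $P_{|wu|}$ and so grows with $|u|$, and the genuinely periodic points $F^{m_{u}}(z_{u})$ are no longer under control, hence not obviously dense. The missing ingredient is precisely what the paper leans on (somewhat tacitly) when it calls the configurations $\left( x\left( -r,r\right) y\left( -n,n\right) x\left( -r,r\right) \right)^{\infty }$ ``$F$-periodic'': for a \emph{surjective} CA with equicontinuity points, the Blanchard--Tisseur theorem (equivalently, the paper's first proposition applied to the uniform measure, which is $F$-invariant exactly because $F$ is surjective, via Poincar\'{e} recurrence) supplies a dense set of genuinely $F$-periodic configurations containing blocking words syndetically. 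For such a point $y^{\prime }$ each column sequence is both exactly $T$-periodic ($T$ its $F$-period) and eventually $P$-periodic, hence exactly $P$-periodic, giving $F^{P}(y^{\prime })=y^{\prime }$ and the desired contradiction. Your argument needs this substitution; as written, the bijection claim is the step that fails.
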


\begin{proof}
Let us suppose that the set of equicontinuous factors contain a finite
number of equivalence classes $\widetilde{\mathit{p}}$.\newline
Denote by $\widetilde{\mathit{p}}_{1},\widetilde{\mathit{p}}_{2},...%
\widetilde{\mathit{p}}_{s}$ the existing $s$ equivalence classes.

Let $r$ be the radius of the cellular automaton and let $x$ be an
equicontinuous point.

Let $y\in A^{\mathbb{Z}}$ from the proof of Lemma \ref{Gil} we know that the
sequence 
\begin{equation*}
F^{i}\left( x\left( -r,r\right) y\left( -n,n\right) x\left( -r,r\right)
\right)
\end{equation*}
is eventually periodic hence associated to a periodic factor.

Let $P=\underset{1\leq i\leq s}{\func{lcm}}\left( \widetilde{\mathit{p}}%
_{i}\right) $ and the sequence of the $F-$periodic points \newline
$\left( x\left( -r,r\right) y\left( -n,n\right) x\left( -r,r\right) \right)
^{\infty }$ that converges to $y.$

We have then for any $y$ in $A^{\mathbb{Z}}$ : 
\begin{eqnarray*}
F^{P}\left( y\right)  &=&F^{P}\left( \underset{k\rightarrow \infty }{\lim }%
x\left( -r,r\right) y\left( -n,n\right) x\left( -r,r\right) ^{\infty
}\right) =\underset{k\rightarrow \infty }{\lim }F^{P}\left( x\left(
-r,r\right) y\left( -n,n\right) x\left( -r,r\right) ^{\infty }\right)  \\
&=&\underset{n\rightarrow \infty }{\lim }\left( x\left( -r,r\right) y\left(
-n,n\right) x\left( -r,r\right) ^{\infty }\right) =y.
\end{eqnarray*}%
\newline
We obtain then $F^{P}=Id$ and hence $F$ is equicontinuous which is a
contradiction.
\end{proof}

\begin{corollary}
The maximal equicontinuous factor of a surjective cellular automaton with
equicontinuity points but without being equicontinuous is not a cellular
automaton.
\end{corollary}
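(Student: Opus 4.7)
The plan is to derive a contradiction from the previous proposition combined with K\r{u}rka's theorem that an equicontinuous cellular automaton is ultimately periodic. Suppose toward contradiction that the maximal equicontinuous factor $\pi:(A^{\mathbb{Z}},F)\to(Y,U)$ is itself a cellular automaton on some alphabet $B$. Then $(Y,U)$ is equicontinuous by definition of the maximal equicontinuous factor, and it is surjective because factor maps preserve surjectivity, as already noted in the preliminaries.

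Next I would apply K\r{u}rka's classification to conclude that there exist integers $m\geq 0$ and $P\geq 1$ with $U^{m+P}=U^{m}$. Since $U^{m}$ is surjective as a composition of surjective maps, the equality $U^{P}\circ U^{m}=U^{m}$ forces $U^{P}=\mathrm{Id}_{B^{\mathbb{Z}}}$; hence $(Y,U)$ is in fact periodic of some period $P$.

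I would then invoke the universal property of the maximal equicontinuous factor: for every equicontinuous factor $\varphi:(A^{\mathbb{Z}},F)\to(Z,V)$ there exists a factor map $\psi:(Y,U)\to(Z,V)$ with $\psi\circ\pi=\varphi$. Since $(Z,V)$ is a factor of the periodic system $(Y,U)$ of period $P$, the remark on periods of factors recalled in the preliminaries gives that the period of $(Z,V)$ divides $P$. Consequently every equicontinuous factor of $(A^{\mathbb{Z}},F)$ has period dividing $P$, so only finitely many equivalence classes $\widetilde{p}$ can occur, contradicting the previous proposition.

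The argument is essentially bookkeeping once the right inputs are identified. The one step that might look nontrivial is passing from \emph{ultimately} periodic to genuinely periodic for $U$, and this is resolved by the surjectivity of $U^{m}$; the rest is a direct application of the universal property of the maximal equicontinuous factor and the elementary divisibility relation between the periods of a system and of its factors.
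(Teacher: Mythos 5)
Your proposal is correct and follows essentially the same route as the paper: assume the maximal equicontinuous factor is a cellular automaton, use surjectivity plus eventual periodicity of equicontinuous cellular automata to get genuine periodicity of some period $P$, and then contradict the preceding proposition via the universal property and the divisibility of factor periods. You are in fact slightly more careful than the paper at the step upgrading ``eventually periodic'' to ``periodic'' (cancelling $U^{m}$ by surjectivity), which the paper glosses over.
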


\begin{proof}
Suppose that the maximal equicontinuous factor is a cellular automaton and
denote it by $M$. It is then surjective and there exist a period $p$ such
that $M^{p}=M.$

As there is an infinity of surjective periodic factors classes. Choose a
periodic factor $E$ such that $q$ the period of $E$ satisfy $q>p$ as $E$ is
a factor of $M$ then $q$ is a divisor of $p$ which is a contradiction.
\end{proof}

\section{Conclusion}

We have shown that any cellular automaton with almost equicontinuous points
has a dense set of strictly temporally periodic points in the topological
support of the measure.

Notice that this category contains only cellular automata without
topological equicontinuous points hence sensitive cellular automata.

It was shown in \cite{KZ13} that there exist mixing cellular automata with a
dense set of strictly temporally periodic points. There exist also mixing
cellular automata whose set of strictly temporally periodic points is
neither empty nor dense.

The dual question for Gilman's classification is interesting . Is there an
example of almost expansive cellular automaton such that the set of strictly
temporally periodic points is neither empty nor dense ?

\end{document}